
\documentclass[12pt]{article}

\usepackage{amsmath, bm}
\usepackage{amssymb}
\usepackage{amsthm}

\usepackage{mathtools}
\usepackage{enumerate}

\usepackage{enumitem} 
\usepackage{graphicx}

\usepackage{placeins} 

\usepackage{geometry}
\geometry{verbose,a4paper,tmargin=25mm,bmargin=25mm,lmargin=25mm,rmargin=30mm}


\newcommand{\R}{\mathbb{R}}
\newcommand{\T}{\mathbb{T}}
\newcommand{\E}{\mathbb{E}}
\newcommand{\C}{\mathbb{C}}
\newcommand{\N}{\mathbb{N}}
\newcommand{\Z}{\mathbb{Z}}

\newcommand{\mch}{\mathcal{H}}
\newcommand{\mcf}{\mathcal{F}}
\newcommand{\mcs}{\mathcal{S}}

\newcommand{\bldA}{\mathbf{A}}
\newcommand{\bldv}{\mathbf{v}} 
\newcommand{\bldc}{\mathbf{c}}
\newcommand{\bldf}{\mathbf{f}}
\newcommand{\bldg}{\mathbf{g}}

\newcommand{\bldeta}{\bm{\eta}}

\newcommand{\supp}{\text{supp}\,}

\DeclareMathOperator{\sep}{sep}
\DeclareMathOperator{\td}{d\!}

\DeclareMathOperator{\dist}{dist}
\DeclareMathOperator{\PW}{PW}
\DeclareMathOperator{\Arg}{Arg}

\newtheorem{theorem}{Theorem}[section]
\newtheorem{lemma}[theorem]{Lemma}

\newtheorem{corollary}[theorem]{Corollary}
\newtheorem{example}[theorem]{Example}

\theoremstyle{remark}
\newtheorem*{remark}{Remark}

\title{Well-Posedness of Sparse Frequency Estimation}
\author{
	Benedikt Diederichs\thanks{University of Passau and Fraunhofer IIS Research Group “Knowledge Based Image Processing”, Passau, Germany} 
}
\date{\today}

\providecommand{\keywords}[1]{\textbf{Key words.} #1}
\providecommand{\amssubjects}[1]{\textbf{AMS subject classifications.} #1}

\begin{document}

\maketitle

\begin{abstract}
	The problem of estimating the frequencies of an exponential sum has been studied extensively over the last years. It can be understood as a sparse estimation problem, as it strives to identify the sparse representation of a signal using exponentials. In this paper, we are interested in its intrinsic stability properties. We derive a bound very similar to the restricted isometry property. Conditional well-posedness follows: Any exponential sum with samples close to the unknown ground truth has close frequencies as well, provided that it satisfies our model assumptions. The most important assumption is that the frequencies are well-separated. Furthermore, we show that the presented bound is sharp and gives rise to improved estimates of condition numbers of certain Vandermonde matrices.
\end{abstract}
\keywords{super-resolution, sparse estimation, frequency analysis, exponential sum}
\amssubjects{65T40, 42C15, 15A18}

\section{Introduction}

Spectral analysis of a given signal is one of the most ubiquitous problems in signal processing.
Depending on the problem at hand, one can choose from a large collection of well-studied methods. 

In this paper, we are interested in one of the most important special cases, where the signal satisfies the harmonic model, i.e., 
$$f(x) = \sum_{y\in Y^f} c_y e^{2\pi i y x}, \quad c_y\in\C\setminus\{0\}. $$
Here, $Y^f\equiv Y\subset \T = \R/\Z \simeq [0,1)$ is the finite set of frequencies of $f$, with coefficients (or amplitudes) $c_y\in \C\setminus \{0\}$. Note that we enumerate $c^f\equiv c = (c_y)_{y\in Y}\in \C^Y$ by the set $Y$, which is convenient if one does not want to fix an enumeration of $Y$.

Such an $f$ is then sampled at integers $k=0,\dots, N$, and we wish to estimate $Y$ and the coefficients $c$. Typically, the available samples are corrupted by noise. 

This problem has been studied for a long time, and many relevant applications are known, see for example the textbook  \cite{manolakis2000statistical}. It reemerged over the last years for a number of reasons. The first is that using the harmonic model allows to overcome resolution limits of generic methods, i.e., super-resolution is possible. And while that is well known, new ideas are further expanding the possibilities in applications.

More interesting for this paper is the connection to compressed sensing. As $f$ is sparse in the frequency domain (its spectrum is a sum of Dirac deltas), the frequency estimation problem can be seen as finding a sparse representation of $f$ from an uncountable, highly correlated dictionary consisting of all functions $x\mapsto e^{2\pi i xy}$. Often, the problem is discretized, see for example \cite{hu2012compressed, duarte2013spectral}. An alternative is an approach using TV-minimization, as promoted in \cite{candes2013super, candes2014towards, duval2015exact} and others. 

In contrast to the existing literature, we are interested in the intrinsic stability properties of the continuous problem. We prove that for any two exponential sums satisfying our model assumption, close samples imply close frequencies. The model assumptions include most notably a separation condition of the frequencies. That bound can be interpreted as a continuous analog of the restricted isometry property in compressed sensing and is valid regardless of a solution method.

As the connection to compressed sensing is of central importance for the paper, we give an informal explanation and motivation in the next section. After that, we state and prove the main results in Section \ref{secLocFuncAndStab}. In the last two sections we present some corollaries: In Section \ref{secSingValVander} we give lower bounds for singular values of Vandermonde matrices and in Section \ref{secAPostEst} we finally state the well-posedness of the problem and a posteriori error bounds.

To keep notation simple, we use $a\lesssim b$, which means that $a\leq  C b$, where $C$ is a positive constant, independent of any quantities of interest (on which $a,b$ might depend). Further, $a\sim b$ means $a\lesssim b$ and $b\lesssim a$. On the other hand, $a \approx b$ is used as an informal notation for $a$ and $b$ being close.

We call an vector $s$-sparse, if all but $s$ of its entries are zero. The Fourier transform is defined by
\[\mcf f (w) = \hat{f}(w) = \int_\R f(x)e^{-2\pi i x w} \td x.\]

\section{Connection to Sparse Estimation}
We start by formulating an abstract sparse estimation problem. We are interested in  identifying 
\begin{align*}
	x = \sum_{\theta \in Y} c_{\theta} h(\theta)\in\mch.
\end{align*} 
Here, $Y\subset \Theta$ is a finite subset of $\Theta\subset\R^d$, the set of valid parameters. $c_{\theta}\in\C\setminus\{0\}$ are the coefficients, $\mch$ is a Banach space and $h:\Theta \rightarrow \mch$ is a continuous and injective function. 

Given are a finite number of measurements of the form
\begin{align*}
	b_j = a_j(x)+\eta_j,\quad j=1,\dots, m,
\end{align*}
where $a_j\in\mch'$ are elements of the dual space and $\eta_j$ is noise, corrupting the measurements.

The frequency estimation problem fits into that framework by choosing 
$$h:\T \rightarrow C(I),\quad h(y) = e^{2\pi i y \cdot},$$
where $I\subset \R$ is an interval containing all sampling points. $a_j$ are point evaluations at the $j$th sampling point and $C(I)$ is the Banach space of continuous functions from $I$ to $\C$.

To solve this problem, one typically discretizes $\Theta$ by replacing it with $\tilde{\Theta}=\{\tilde{\theta}_1,\dots, \tilde{\theta}_N \}$. Letting $a_{jk} = a_j(\tilde{\theta}_k)$ and $A=(a_{jk})$ and assuming $\theta_j\in\tilde{\Theta}$, we arrive at the typical problem of compressed sensing, namely
$$Ax = b, $$
where $b=(b_1,\dots, b_m)\in\C^m$. We identified $x$ with the vector $x\in\C^N$, $x_k = c_j$ if $\tilde{\theta}_k = \theta_j$ and zero otherwise. Now as $N\gg m$ is expected, the problem is underdeterminded. However, under suitable conditions on $A$, we can exploit the sparsity of $x$ to recover $(c_{\theta_j},\theta_j)_{j=1}^s$.

A necessary condition to do so is that if we have given a second sparse vector $y\in\C^N$, we have to have
\begin{align*}
	Ax \approx Ay~ \Rightarrow~ x\approx y.
\end{align*}
This is satisfied if the matrix $A$ satisfies for a $\delta \in [0,1)$ the lower bound
\begin{align}\label{eqRIPlow} (1-\delta)\|z\|_2^2 \leq \|Az\|_2^2 \qquad \textrm{ for all $2s$-sparse vectors }z,\end{align}
which is just the lower bound of the well-known restricted isometry property. It can be directly applied, as $x-y$ is $2s$-sparse. 

One simple consequence of \eqref{eqRIPlow} is that if all non-zero coefficients of $x$ and $y$ have a modulus of at least $\rho>0$, we have that
\begin{align} \label{eqSuppEqual} \|A(x-y)\|_2^2 < (1-\delta) \rho^2 ~\Rightarrow~ \supp x = \supp y.\end{align}
Combined with a noise model, one can easily derive a posteriori error estimates. Of course all these considerations do not help too much in actually solving the problem $Ax = b$ with $x$ sparse, which is due to its non-convex nature quite difficult. However, without a bound like \eqref{eqRIPlow}, the problem is ill-posed and its solution non-unique or not stable. See \cite{foucart2013mathematical} for a thorough introduction to compressive sensing.

In practice, discretizing $\Theta$ poses a difficult problem. When discretizing too coarsely, the assumption $\theta_j \in \tilde{\Theta}$ might be far from true. On the other hand if one discretises $\Theta$ very finely, many columns of $A$ will be almost equal and \eqref{eqRIPlow} will not hold. There are possibilities to circumvent these problems. Though not of interest for our approach, we give two examples. An adaptive discretization strategy is possible, see for example \cite{hu2012compressed}.
Another alternative is to discretize $\Theta$ finely, and then stick to subspaces where all $\tilde{\theta}_j$s are not too close. That approach is called structured compressed sensing, see for example \cite{duarte2013spectral}.

Here, however, we strive to prove similar stability properties without having to discretize $\Theta$ first. Due to the continuity of $h$ it is not possible to stably distinguish between $h(\theta)$ and $h(\theta+\varepsilon)$; indeed, if $h$ is differentiable, we expect that
$$\|h(\theta)- h(\theta+\varepsilon)\| \sim |\varepsilon|,\quad \text{ for } \varepsilon \text{ small}.$$  
The natural idea is therefore to replace a sparse sum by a well-separated sum. Then one can ask whether it is possible to estimate $\{ \theta_1,\dots, \theta_s \}$ stably, when there is a separation distance $q\in\R_{>0}$ such that  $|\theta_j-\theta_k|\geq q$ for all $j\neq k$. 

On the first glance, it seems that we have to prove a bound like $\eqref{eqRIPlow}$. Such a bound would be of the form
\begin{align}\label{eqWellSepRip1} \|c\|_2^2 \lesssim \left\| \sum_{\theta\in Y} c_\theta h(\theta)\right\|^2
\quad \end{align}
uniformly over all $Y\subset \Theta$ satisfying $|\theta-\theta'|\geq q$ for all $\theta\neq \theta'\in Y$. This, however, is not sufficient to guarantee that we are able to distinguish two different well-separated families. Let $Y'=\{ \theta_1',\dots, \theta_{s'}' \}$ be a second set with $q$ separated $\theta_j$s. We cannot hope to get a bound like \eqref{eqWellSepRip1}, as some $\theta_j'$ could be arbitrarily close to a $\theta_k$. 

The next best thing is that a good matching between $Y$ and $Y'$ must exist if their samples are very close. Clearly, for every $\theta \in Y$ there is at most one $\theta'\in Y'$ with $|\theta - \theta'| < q/2$, which we call $n(\theta)$. We denote by $Y_1$ all $y\in Y$ with such a match and by $Y_3$ all elements from $Y\cup Y'$ without a match (the enumeration is chosen to be consistent with later use). Then a bound like
\[
\sum_{\theta\in Y_3} |c_\theta|^2 + \sum_{\theta\in Y_1}\left( |\theta-n(\theta)|^2|c_\theta + c_{n(\theta)}|^2 + |c_\theta - c_{n(\theta)}|^2 \right) \lesssim \left\| \sum_{\theta \in Y} c_\theta h(\theta) - \sum_{\theta' \in Y'} c_{\theta'}h(\theta')\right\|^2 \]
might be possible. If the right-hand side is sufficiently small and all coefficients have a modulus of at least $c_{\min} > 0$, we can deduce that $Y_3 = \emptyset$ and a one-to-one matching between $Y$ and $Y'$ exists. That can be interpreted as the continuous analog of \eqref{eqSuppEqual}. Furthermore, if the right-hand side becomes very small, we can conclude that $\theta\approx n(\theta)$ and $c_\theta \approx c_{n(\theta)}$. 

To prove such a bound for the frequency estimation problem is the main objective of this paper. The bound is presented in Theorem \ref{mainThm1d} and its implication for the well-posedness is given in Corollary \ref{corWellPosedness}. An example of an a posteriori bound is given in Corollary \ref{corAPost}. Clearly, such a bound always implies a lower bound of the corresponding sampling matrix, which is a Vandermonde matrix in our case. We state these bounds explicitly in Section \ref{secSingValVander}.

Such  bounds do not give a tractable method to determine $Y$. They only indicate that a stable estimation is possible. However, for the problem at hand, many very efficient methods exist. Some of them, like ESPRIT \cite{roy1986esprit}, do not need a discretization of $Y$, as they estimate $Y$ by an eigenvalue problem. 

The results presented here are not directly related to the popular approach of using TV-minimization to solve the sparse frequency estimation problem. We are interested in proving intrinsic stability of the problem, independent of the solution method. On the other hand, TV-minimization is one particular technique. Unfortunately, it is not possible to directly reprove results from TV-minimization using the bounds presented here. The reason is that one cannot easily prove that TV-minimization yields measures satisfying our model assumptions. We refer to \cite{duval2015exact} for a good overview, including stability and convergence results. 

\section{Localizing Functions and Stability} \label{secLocFuncAndStab}
We introduce some notation. Let
\begin{align*}
	\mcs = \left\{ \sum_{y\in Y} c_y e^{2\pi i y \cdot} ~:~ c_y\in\C\setminus \{0\} ,~Y \subset [0,1) \textrm{ finite}\right\}.
\end{align*}
For an $f\in \mcs$, we call $Y^f$ the set of its frequencies and $c^f\in\C^{Y^f}$ the corresponding coefficients, which we enumerate by $Y^f$. 
We measure the distance of two frequencies $y,y'\in [0,1)$ by the \textit{wrap-around distance}
$$ |y-y'|_\T = \min_{k\in\Z} |y-y'-k|$$
and for a finite set $Y\subset [0,1)$, we define its separation by
$$ \sep Y = \min_{\substack{y,y'\in Y\\ y\neq y'}} |y-y'|_\T$$
and collect for a $q>0$ all well-separated exponential sums in 
$$ \mcs(q) = \left\{f\in\mcs ~:~ \sep Y^f \geq q \right\}.$$
As a first result, we prove bounds of the form 
$$\| c^f\|_2^2 \lesssim \sum_{k=A}^B |f(k)|^2 \lesssim \|c^f\|_2^2 \qquad \forall f\in\mcs(q).$$
Such results are well-known, particular sharp lower bounds are proven by Moitra in \cite{moitra2015super} and by Aubel and B\"olcskei in \cite{aubel2017vandermonde}. A sharp upper bound was already given by Selberg \cite{vaaler1985some}. We use basically the same approach as Moitra, however improving it. 

At the core of Moitra's proof are particular functions, that were already constructed by Selberg.  These functions $\psi_{A,B,q}\in L^1(\R),~A,B\in\Z,~A<B,~q\in\R_{>0}$ satisfy the following properties:
\begin{enumerate}[label=(P\arabic*)]
	\item \label{prop_loc1} $\psi_{A,B,q}\leq \chi_{[A,B]}$,
	\item \label{prop_loc2} $\supp \mcf \psi_{A,B,q} \subset [-q,q]$,
	\item $\mcf \psi_{A,B,q} (0) = \int_\R \psi_{A,B,q}(x)\td x = B-A - q^{-1}.$
\end{enumerate}
Note that \ref{prop_loc2} implies that $\psi_{A,B,q}$ is an entire function. 

These functions $\psi_{A,B,q}$ are useful, because they allow to estimate a function localized in the spatial domain by something localized in the frequency domain, thus ``cheating'' the uncertainty principle. We call any function satisfying \ref{prop_loc1} and \ref{prop_loc2} \textit{$q$-localizing functions}. For the special case $B-A\in q\Z$, the functions $\psi_{A,B,q}$ are extremal, in the sense that $\int_\R \chi_{[A,B]}- \psi_{A,B,q}$ is minimized. The extremal functions in the other cases are known as well, a result due to Littmann, see \cite[Theorem 5.2]{littmann2013quadrature}.

The localizing function lie in the Paley-Wiener space
\[ \PW = \{ f \in L^2(\R) ~:~ \supp \mcf f \subset [-1,1] \}.
\]
Furthermore, the following version of Poisson summation will prove useful, which follows from the more general Poisson formula given in \cite[p. 69]{zygmund1968trigonometric}.

\begin{theorem}[Poisson Summation Formula]
	For any $f\in \PW\cap\, L^1(\R)$ it holds true that
	$$\sum_{k\in\Z} f(k) = \sum_{k\in\Z}\hat{f}(k).$$
\end{theorem}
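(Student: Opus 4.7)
The plan is to reduce the identity to the observation that the periodization of $f$ is a trigonometric polynomial, and then appeal to the general Poisson formula cited from Zygmund.

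First, I would form the periodization $F(x) = \sum_{k\in\Z} f(x+k)$. Since $f \in L^1(\R)$, this series converges in $L^1([0,1])$, so $F$ is a well-defined $1$-periodic integrable function. A standard unfolding computation identifies its Fourier coefficients as the samples of $\hat f$:
\[
\hat{F}(n) \;=\; \int_0^1 \sum_{k\in\Z} f(x+k)\, e^{-2\pi i n x}\td x \;=\; \int_\R f(y)\, e^{-2\pi i n y}\td y \;=\; \hat{f}(n).
\]
Because $f\in L^1(\R)$, the function $\hat f$ is continuous, and the Paley--Wiener assumption $\supp \hat f \subset [-1,1]$ then forces $\hat f(n)=0$ for $|n|\ge 2$. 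Only the three coefficients $\hat f(-1),\hat f(0), \hat f(1)$ can be nonzero.

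Consequently, the Fourier series of $F$ is the finite trigonometric polynomial
\[
P(x) \;=\; \hat{f}(-1)\,e^{-2\pi i x} + \hat{f}(0) + \hat{f}(1)\, e^{2\pi i x},
\]
so the Fourier series of $F$ is trivially absolutely (in fact, finitely) convergent. This is exactly the regularity hypothesis under which the general Poisson formula of Zygmund \cite[p.~69]{zygmund1968trigonometric} yields pointwise equality $F(x) = P(x)$ together with the convergence of $\sum_k f(x+k)$ at every point. Evaluating at $x=0$,
\[
\sum_{k\in\Z} f(k) \;=\; F(0) \;=\; P(0) \;=\; \hat{f}(-1)+\hat{f}(0)+\hat{f}(1) \;=\; \sum_{n\in\Z}\hat f(n),
\]
which is the claimed identity.

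The main obstacle I would anticipate is the passage from the a.e.\ equality of $F$ and $P$ (coming from equality of their Fourier coefficients) to equality at the single point $x=0$: for a generic $L^1$-periodization, pointwise convergence of $\sum_k f(x+k)$ is delicate. The saving grace here is precisely the bandlimitedness: $\hat f \in L^1 \cap C_c$, so $f$ is continuous and, more importantly, the Fourier series of $F$ collapses to a finite sum, which is exactly the hypothesis under which Zygmund's more general Poisson formula delivers the pointwise identity we need.
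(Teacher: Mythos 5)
Your argument is correct and is essentially the same route the paper takes: the paper offers no proof beyond the citation to Zygmund's general Poisson formula, and your periodization argument (Fourier coefficients of $F$ equal the samples of $\hat f$, which vanish for $|n|\ge 2$ by bandlimitedness, so the Fourier series is a finite trigonometric polynomial) is exactly the standard reduction to that cited result. One small caveat: in Zygmund's theorem the everywhere-convergence of $\sum_k f(x+k)$ to a continuous sum is a \emph{hypothesis} to be verified rather than part of the conclusion; here it does hold, e.g.\ because $f=f*\psi$ for a Schwartz function $\psi$ with $\hat\psi\equiv 1$ on $[-1,1]$, whence $\sum_k|f(x+k)|\le \|f\|_{L^1}\sup_u\sum_k|\psi(u+k)|<\infty$ with uniformly small tails, so the periodization converges uniformly to a continuous function and your evaluation at $x=0$ is justified.
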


With these ingredients, we can give an improved version of Moitra's lower bound.
 
\begin{theorem} \label{tWellSepExp}
	Let $f\in\mcs(q)$ and $A,B\in\Z$, $A<B$. Then 
	\begin{align*}
		\left(B-A+2-\frac1{q}\right)\|c^f\|_2^2 \leq \sum_{k=A}^B |f(k)|^2 \leq \left( B-A +\frac1{q} \right).
	\end{align*}
\end{theorem}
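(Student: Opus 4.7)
My strategy for both inequalities is to sandwich $\sum_{k=A}^B |f(k)|^2$ between whole-line weighted sums $\sum_{k \in \Z} w(k)|f(k)|^2$, where $w$ is either a Selberg-type majorant of $\chi_{[A,B]}$ or the minorant $\psi_{A-1,B+1,q}$ of a slightly enlarged indicator. Each such whole-line sum I would evaluate in closed form: expand $|f|^2 = \sum_{y,y' \in Y^f} c_y \overline{c_{y'}} e^{2\pi i(y-y')\,\cdot}$, apply Poisson summation to each summand $w(x) e^{2\pi i(y-y')x}$, whose Fourier transform is $\mcf w\bigl(\,\cdot\, - (y-y')\bigr)$ supported in $[y-y'-q, y-y'+q]$, and use the wrap-around separation $|y-y'-k|\geq q$ for every $k\in\Z$ to kill all off-diagonal terms. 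For the upper bound, I would take a Selberg-type majorant $\phi_{A,B,q} \geq \chi_{[A,B]}$ (symmetric to $\psi_{A,B,q}$) with $\supp \mcf \phi_{A,B,q} \subset [-q,q]$ and $\mcf \phi_{A,B,q}(0) = B - A + 1/q$. Since $\phi_{A,B,q} \geq 0$ everywhere and $\phi_{A,B,q}(k) \geq 1$ for $k \in \{A,\dots,B\}$, one obtains $\sum_{k=A}^B |f(k)|^2 \leq \sum_{k \in \Z}\phi_{A,B,q}(k)|f(k)|^2$, and the Poisson computation collapses the right-hand side to $\mcf\phi_{A,B,q}(0)\|c^f\|_2^2 = (B - A + 1/q)\|c^f\|_2^2$.

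The lower bound is where the sharpening over Moitra enters. A direct application with $\psi_{A,B,q}$ would only deliver the constant $B - A - 1/q$. My proposal is to use instead the minorant of the \emph{enlarged} interval, $\psi_{A-1,B+1,q}$, whose integral equals $(B+1)-(A-1)-1/q = B - A + 2 - 1/q$. The identical Poisson identity gives
\[\sum_{k \in \Z} \psi_{A-1,B+1,q}(k)|f(k)|^2 = (B - A + 2 - 1/q)\|c^f\|_2^2.\]
To extract the theorem one has to dominate this whole-line sum by $\sum_{k=A}^B |f(k)|^2$, and this reduces to the bounds $\psi_{A-1,B+1,q}(k) \leq 1$ for $k \in \{A,\dots,B\}$ together with $\psi_{A-1,B+1,q}(k) \leq 0$ for every integer $k \notin \{A,\dots,B\}$. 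Property (P1) handles every such integer except the two boundary values $k = A-1$ and $k = B+1$, where $\chi_{[A-1,B+1]} = 1$. For these I would exploit that $\psi_{A-1,B+1,q}$ is entire, since its Fourier transform is compactly supported, hence continuous on $\R$: property (P1) yields $\psi_{A-1,B+1,q}(x) \leq 0$ for $x$ just below $A-1$ or just above $B+1$, so continuity forces $\psi_{A-1,B+1,q}(A-1), \psi_{A-1,B+1,q}(B+1) \leq 0$ as well.

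The step I expect to require the most care is the Poisson summation applied to $g_\alpha(x) = \psi_{A-1,B+1,q}(x) e^{2\pi i \alpha x}$ with $\alpha = y - y'$ (and its analogue for $\phi_{A,B,q}$). The cited Poisson theorem requires $g_\alpha \in \PW \cap L^1$, i.e.\ $\mcf g_\alpha$ supported in $[-1,1]$. Since $\mcf g_\alpha$ lives in $[\alpha - q, \alpha + q]$, this boils down to $|\alpha| + q \leq 1$, which is exactly what the two-sided wrap-around separation provides: applying $|y-y'-k|\geq q$ at $k = 0$ and $k = \pm 1$ pins $|y-y'|$ into $[q, 1-q]$. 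A small residual subtlety is the boundary case $|y-y'-k| = q$, which I would dispatch either by shrinking $q$ infinitesimally and taking a limit, or by observing that $\mcf \psi_{A-1,B+1,q}$ vanishes at $\pm q$, so such boundary integers contribute zero on the Poisson side.
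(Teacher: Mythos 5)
Your proof is correct and follows essentially the same route as the paper: the lower bound uses the minorant $\psi_{A-1,B+1,q}$ of the enlarged interval, Poisson summation with the $q$-separation killing the off-diagonal terms, and continuity of $\psi_{A-1,B+1,q}$ to handle the boundary integers $A-1$ and $B+1$, while the upper bound is the standard Selberg-majorant argument that the paper simply cites. Your extra care with the hypotheses of the Poisson formula (checking $\supp\mcf g_\alpha\subset[-1,1]$ and the boundary case $|y-y'-k|=q$) fills in details the paper glosses over, but does not change the argument.
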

\begin{proof}
	The upper bound is already due to Selberg, see \cite{vaaler1985some}. For the lower bound, note that
	\begin{align*}
		\sum_{k=A}^B |f(k)|^2 &\geq \sum_{k\in\Z} \psi_{A-1,B+1,q}(k)|f(k)|^2 = \sum_{y,y'\in Y^f} c_y \overline{c}_{y'} \sum_{k\in\Z} \psi_{A-1,B+1,q}(k)e^{2\pi i (y-y')k} \\
		&= \sum_{y,y'\in Y^f} c_y \overline{c}_{y'} \hat{\psi}_{A-1, B+1, q}(y-y') = \left(B-A+2-\frac1{q}\right)\|c^f\|_2^2,
	\end{align*}
	where we used Poisson summation and the fact that $\psi_{A-1, B+1, q}(A-1)=\psi_{A-1, B+1, q}(B+1) \leq 0$ (in fact, equality holds) due to \ref{prop_loc1} and the continuity of $\psi_{A-1,B+1,q}$.
\end{proof}

\begin{remark}
	Moitra used $\psi_{A,B,q}$ instead of $\psi_{A-1,B+1,q}$, resulting in the constant $(B-A-q^{-1})$. Aubel and B\"olcskei improved the constant to $(B-A+\frac32 - q^{-1})$. Furthermore, they discuss the more general case of frequencies in the unit disc. The constant $\left(B-A+2-\frac1{q}\right)$ is sharp in the following sense: For $q= (B-A+2)^{-1}$ the space $\mcs(q)$ contains a linear subspace of dimension $B-A+2$ and it is therefore clear that there is a $f\in \mcs(q)$ vanishing on $k=A,\dots, B$. Thus, for $q= (B-A+2)^{-1}$ the best lower bound is zero. 
\end{remark}

The proof of Theorem \ref{tWellSepExp} is worth a short reflection. A $q$-localizing function $\psi$ gives rise to a sesquilinear form 
\[ (\cdot, \cdot)_\psi : \mcs \times \mcs \rightarrow \C
\]
satisfying the following properties:
\begin{enumerate}
	\item For two exponentials, we have
	\[ (e^{2\pi i y \cdot}, e^{2\pi i y'})_\psi =\hat{\psi}(y-y'). \]
	In particular, it is local in the sense that whenever $|y-y'|_\T \geq q$, we have that
	\[ (e^{2\pi i y \cdot}, e^{2\pi i y'\cdot})_\psi = 0.  \]
	\item It minorizes sampling at $A,\dots, B$, i.e., for all $f\in\mcs$ we get 
	 \[ (f, f)_\psi \leq \sum_{k=A}^B |f(k)|^2. \]
\end{enumerate}
One can use these properties to estimate other constellations of frequencies. If for an $f\in\mcs$ we have that
\[ f = f_1+\dots+ f_R \]
such that all $f_r \in \mcs$ have mutually $q$-separated frequencies in the sense that
\[\dist(Y^{f_j}, Y^{f_k}) \geq q, \qquad \textrm{ for all } j\neq k,\]
we obtain the lower bound
\[  \sum_{k=A}^B |f(k)|^2 \geq \sum_{r=1}^R (c^{f_r})^* \left( \hat{\psi}(y-y') \right)_{y,y'\in Y^{f_r}} c^{f_r} \geq \lambda_{\min} \|c^f\|_2^2. \]
Here, $\lambda_{\min}$ denotes the smallest eigenvalue of any of the $R$ matrices $( \hat{\psi}(y-y') )_{y,y'\in Y^{f_r}}$.
For example if every $f_j$ corresponds to a cluster of up to $L$ frequencies, we have to find a $\psi$ such that 
\begin{align} \left(\hat{\psi}(y-y') \right)_{y,y'\in Y} \label{eqKernelMatPsi}\end{align}
is positive definite for any $Y\subset [0,1)$, $|Y|\leq L$ of interest, e.g., of a certain inner separation. Then we need to bound the smallest eigenvalue uniformly over all these sets.

For general values of $L$ that seems to be a difficult problem. However, in the case $L=2$, this is easy, as for $Y=\{y,y'\}$ the matrix \eqref{eqKernelMatPsi} has eigenvalues $\hat{\psi}(0)\pm |\hat{\psi}(y-y')|$. Clearly, it is necessary for $\hat{\psi}$ to have a global maximum in zero. Then, an estimate of the form
\begin{align}\label{eqLocPsiHat} \hat \psi (0) - |\hat{\psi}(y-y')| \gtrsim |y-y'|_\T^2
\end{align}
gives rise to a very sharp bound. The localizing functions used above unfortunately do not have a global maximum at zero. We construct an alternative. A useful tool is a formula going back to Jagerman and Fogel \cite{jagerman1956some}, which allows for Hermite interpolation on $\Z$ in the Paley-Wiener space $\PW$.

\begin{theorem}
	For any $f\in \PW$ the following representation holds true:
	\[ f(x) = \frac{\sin^2(\pi x)}{\pi ^2} \sum_{k\in\Z} \left( \frac{f(k)}{(x-k)^2}+\frac{f'(k)}{x-k}\right).
	\]
\end{theorem}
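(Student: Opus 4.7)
The plan is to verify the identity on the Fourier side. First I would rewrite the right-hand side as
\[
R(x) := \sum_{k \in \Z} \bigl( f(k)\,\phi(x-k) + f'(k)\,\psi(x-k) \bigr),
\]
where $\phi(x) := \sin^2(\pi x)/(\pi x)^2$ and $\psi(x) := \sin^2(\pi x)/(\pi^2 x)$ are the Hermite cardinal functions at integer nodes: a Taylor expansion of $\sin(\pi(x-k))$ at $k$ yields $\phi(k) = \delta_{k,0}$, $\phi'(k) = 0$, $\psi(k) = 0$, $\psi'(k) = \delta_{k,0}$. Both lie in $\PW$; using $\psi(x) = x\,\phi(x)$ together with differentiation under the Fourier transform, one computes $\hat\phi(w) = (1-|w|)_+$ (a triangle on $[-1,1]$) and $\hat\psi(w) = \operatorname{sgn}(w)/(2\pi i)$ on $(-1,1)$, zero outside. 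Thus $R$ is the natural Hermite interpolant at the integers.

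Second, I would verify convergence. Since $\supp \hat f \subset [-1,1]$, any periodization of $\hat f$ or of $\widehat{f'}(w) = 2\pi i w\,\hat f(w)$ (also supported in $[-1,1]$) stacks at most two copies per unit cell, which immediately yields the Bessel-type bounds $\sum_k |f(k)|^2 \lesssim \|f\|_{L^2}^2$ and $\sum_k |f'(k)|^2 \lesssim \|f\|_{L^2}^2$. Combined with $\phi, \psi \in L^2(\R)$, the series defining $R$ converges in $L^2(\R)$, and the limit lies in $\PW$.

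Third, the identity reduces to a single Fourier-side computation. Applying Poisson summation to the modulated functions $f(\cdot)\,e^{-2\pi i w\,\cdot}$ and $f'(\cdot)\,e^{-2\pi i w\,\cdot}$ gives
\[
\sum_k f(k)\,e^{-2\pi i k w} = \sum_\ell \hat f(w+\ell), \qquad \sum_k f'(k)\,e^{-2\pi i k w} = 2\pi i \sum_\ell (w+\ell)\,\hat f(w+\ell),
\]
of which, for $w \in (0,1)$, only the terms $\ell = 0$ and $\ell = -1$ survive. Assembling $\hat R(w) = \hat\phi(w) \sum_k f(k)\,e^{-2\pi i k w} + \hat\psi(w) \sum_k f'(k)\,e^{-2\pi i k w}$, the coefficient of $\hat f(w-1)$ collapses to $(1-w) + (w-1) = 0$, while the coefficient of $\hat f(w)$ becomes $(1-w) + w = 1$, so $\hat R(w) = \hat f(w)$. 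The symmetric computation works on $(-1,0)$, and outside $[-1,1]$ both $\hat\phi$ and $\hat\psi$ vanish. Hence $\hat R = \hat f$, so $R = f$ in $L^2$, and by continuity of $\PW$ functions the identity holds pointwise.

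The main technical obstacle is that the Poisson summation formula as stated in the paper requires $f \in \PW \cap L^1(\R)$, a strictly smaller class than $\PW$. I would bypass this by a density argument: the Bessel estimate makes the right-hand side continuous in $f \in \PW$ in the $L^2$-norm, and the subspace of Schwartz functions in $\PW$ is dense (approximate $\hat f \in L^2([-1,1])$ by $C_c^\infty$ functions supported strictly inside $(-1,1)$), so the identity, once verified on this dense subset where Poisson applies directly, extends to all of $\PW$.
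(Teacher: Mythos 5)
The paper does not prove this theorem at all: it is quoted as a known Hermite-interpolation formula due to Jagerman and Fogel, with a citation in place of a proof. Your Fourier-side argument is a correct, self-contained derivation, and all the key computations check out: $\hat\phi(w)=(1-|w|)_+$, $\hat\psi(w)=\operatorname{sgn}(w)/(2\pi i)$ on $(-1,1)$, and the cancellation $(1-w)+(w-1)=0$ of the aliased term $\hat f(w-1)$ against $(1-w)+w=1$ for $\hat f(w)$ is exactly right. Two small points deserve tightening. First, $\phi,\psi\in L^2$ together with $(c_k)\in\ell^2$ does not by itself give $L^2$ convergence of $\sum_k c_k\phi(\cdot-k)$; you need the integer shifts to form a Bessel sequence, which here follows because $\hat\phi$ and $\hat\psi$ are bounded with compact support (so the periodization of $|\hat\phi|^2$ is bounded). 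Second, the ``Poisson summation for modulated functions'' step is cleaner if you phrase it as the elementary fact that the Fourier coefficients of the $1$-periodization of $\hat f$ (respectively of $\widehat{f'}=2\pi i w\,\hat f$) are the samples $f(k)$ (respectively $f'(k)$), which holds directly for $\hat f\in L^1\cap L^2$ with compact support by Fourier inversion; this makes the entire identity an $L^2$ Fourier-series computation and renders the closing density argument unnecessary, though your density argument is also a valid way to close the gap.
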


We are now able to construct a suitable localizing function. 

\begin{lemma}
	The function $\phi\in\PW \cap \, L^1(\R)$ defined by
	\[  \phi(x) = \frac{\sin^2(\pi x)}{\pi ^2} \left( \frac23 \left( \frac1{x}-\frac1{x-3} \right)+ \frac1{(x-1)^2}+\frac1{(x-2)^2} \right) \]
	satisfies $\phi\leq \chi_{[0,3]}$ and $\hat{\phi}(0) = 2$.
\end{lemma}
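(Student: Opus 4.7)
The key simplification is to rewrite $\phi$ in terms of $\text{sinc}(t) := \sin(\pi t)/(\pi t)$ as
\[ \phi(x) = \text{sinc}^2(x-1) + \text{sinc}^2(x-2) + 2\,\text{sinc}(x)\,\text{sinc}(x-3). \]
To see this, I would first combine the first two summands in the bracket via partial fractions,
\[ \tfrac{2}{3}\bigl(\tfrac{1}{x} - \tfrac{1}{x-3}\bigr) = \tfrac{2}{x(3-x)}, \]
and then use the integer-shift identity $\sin(\pi(x-k)) = (-1)^k \sin(\pi x)$, which gives both $\sin^2(\pi x)/[\pi^2(x-k)^2] = \text{sinc}^2(x-k)$ for $k\in\Z$ and $\sin^2(\pi x)/[\pi^2 x(3-x)] = \text{sinc}(x)\text{sinc}(x-3)$. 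From this representation $\phi\in\PW$ is immediate, since every summand is a product of two functions with Fourier support in $[-\tfrac12,\tfrac12]$, and $\phi\in L^1$ follows from the pointwise decay $\phi(x) = O(x^{-2})$ at infinity.

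For the bound $\phi \leq \chi_{[0,3]}$ I would treat the interior and the exterior of $[0,3]$ separately. On all of $\R$, applying $2ab \leq a^2+b^2$ with $a=\text{sinc}(x), b=\text{sinc}(x-3)$ and then enlarging the sum to all of $\Z$ gives
\[ \phi(x) \leq \sum_{k=0}^3 \text{sinc}^2(x-k) \leq \sum_{k\in\Z}\text{sinc}^2(x-k) = 1, \]
the last equality being the $\text{sinc}^2$ partition of unity, a direct consequence of Poisson summation since the Fourier transform of $\text{sinc}^2$ is the triangle function, vanishing at every nonzero integer. This settles the case $x\in[0,3]$. For $x\notin[0,3]$ one needs $\phi(x)\leq 0$; substituting $x\mapsto 3-x$ in the original bracket shows $\phi(x)=\phi(3-x)$, so it is enough to consider $x\leq 0$. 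There $x(x-3)>0$, so after clearing the nonnegative factor $\sin^2(\pi x)/\pi^2$ the target becomes
\[ \tfrac{1}{(x-1)^2} + \tfrac{1}{(x-2)^2} \leq \tfrac{2}{x(x-3)}. \]
Setting $u = 1-x \geq 1$ and $v = 2-x = u+1$ and clearing denominators reduces this to $(u^2+v^2)(uv-2) \leq 2u^2 v^2$, which after expanding $v=u+1$ simplifies to $3u^2 + 3u + 2 \geq 0$, evidently true.

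The integral is then a one-line Plancherel computation: $\int \text{sinc}^2(x-k)\,dx = \|\chi_{[-1/2,1/2]}\|_2^2 = 1$ and
\[ \int \text{sinc}(x)\text{sinc}(x-3)\,dx = \int_{-1/2}^{1/2} e^{6\pi i w}\,dw = \frac{\sin(3\pi)}{3\pi} = 0, \]
so $\hat\phi(0) = \int_\R \phi = 2\cdot 0 + 1 + 1 = 2$. The only real obstacle is spotting the sinc re-expression; once it is in hand the upper bound collapses to AM-GM plus the Fejér partition of unity, and the integral is immediate.
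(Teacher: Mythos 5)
Your proof is correct, and it takes a genuinely different (and in places cleaner) route than the paper's. The sinc re-expression $\phi(x)=\mathrm{sinc}^2(x-1)+\mathrm{sinc}^2(x-2)+2\,\mathrm{sinc}(x)\,\mathrm{sinc}(x-3)$ checks out, and from it your bound $\phi\leq 1$ on all of $\R$ follows from $2ab\leq a^2+b^2$ together with the Fej\'er partition of unity $\sum_{k\in\Z}\mathrm{sinc}^2(x-k)=1$; the paper instead proves $\phi\leq 1$ only on $[0,3]$ by rewriting $1$ via the same partition of unity and then bounding the tail sums $\sum_{k\geq K}(x-k)^{-2}$ from below with a trapezoidal-rule/convexity estimate. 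Your AM--GM step avoids that estimate entirely, at the cost of being tied to the specific structure of this $\phi$ (a single cross term), whereas the paper's tail bound is the kind of argument that generalizes to other minorants of $\chi_{[A,B]}$. For the exterior bound $\phi\leq 0$ on $x<0$ the paper simply says the calculation is omitted; your explicit reduction to $3u^2+3u+2\geq 0$ (which I verified: with $v=u+1$ one gets $2u^2v^2-(u^2+v^2)(uv-2)=3u^2+3u+2$) actually supplies the missing computation, and the symmetry $\phi(3-x)=\phi(x)$ correctly disposes of $x>3$. Finally, you compute $\hat\phi(0)=2$ by Plancherel and orthogonality of integer-shifted sincs, while the paper uses Poisson summation ($\sum_k\phi(k)=\phi(1)+\phi(2)=2$ and $\hat\phi(\pm1)=0$); both are one-line arguments. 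No gaps.
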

\begin{proof}
	We start with $\phi\leq \chi_{[0,3]}$. The claim obviously holds true for $x\in\Z$, which we exclude in the following. Verifying $\phi(x)\leq 0$ for $x\leq 0$ is a direct calculation that we omit here. The case $x\geq 3$ follows by symmetry.
	
	The inequality $\phi(x)\leq 1$ for $x\in [0,3]$ is slightly more complicated. Note that
	\begin{align*}
		\phi(x) \leq 1 = \frac{\sin^2(\pi x)}{\pi^2} \sum_{k\in\Z} \frac1{(x-k)^2},
	\end{align*}
	which is equivalent to 
	\begin{align}\label{eqIneqProofLemma}
		\frac23 \left( \frac1{x}+\frac1{3-x}\right) \leq \sum_{\substack{k\in\Z \\ k\neq 1,2}} \frac1{(x-k)^2}.
	\end{align}
	Now as the trapezoidal rule overestimates convex functions, we can estimate for $K\in\Z$ and $x < K$ that
	\begin{align*} \sum_{k\geq K} \frac1{(x-k)^2} \geq \frac12\frac1{(x-K)^2} + \int_K^\infty \frac1{(x-y)^2}\td y = \frac12\frac1{(x-K)^2} +\frac1{K-x}
	\end{align*}
	and \eqref{eqIneqProofLemma} follows by applying this twice. $\hat{\phi}(0) = 2$ results from Poisson summation.
\end{proof}

\begin{remark}
	A few remarks are in order.
	\begin{enumerate}
		\item It follows by Poisson summation that $\phi$ maximizes $\hat{\phi}(0)$ over all $f\in\PW$ satisfying $f\leq \chi_{[0,3]}$. $\phi$ is not unique, other values for $\phi'(0)$ than $\frac23$ are possible (namely, any number between $\left[\frac23, \frac43 \right]$). Similar constructions work for any minorants of $\chi_{[A,B]},~ A,B\in\Z$. In particular, it follows that if $A+1=B$, the constant zero minorant is optimal.
		\item The non-uniqueness is certainly already known, however difficult to track down. For example, in \cite[p. 289]{graham1981class} a similar result is cited for the majorants and attributed to unpublished work of Selberg. 
		\item The construction originally used by Selberg and by Moitra corresponds to choosing $\phi'(0) = 1$. 
	\end{enumerate}
\end{remark}

\eqref{eqLocPsiHat} is now the result of a direct calculation. 
\begin{lemma}
	$\phi$ satisfies
	\[ \hat{\phi}(0) -|\hat{\phi}(w)| \geq \begin{cases} \pi^2 w^2, & \textrm{ for } |w|\in \left[0,\frac13\right]\\
				\pi^2/9 & \textrm{ for } |w|\in \left[\frac13,1\right]. \end{cases}
			 \]
\end{lemma}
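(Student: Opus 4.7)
The plan is to write $\hat{\phi}$ explicitly on the interval $[-1,1]$ where it is supported, and then verify the two claimed bounds by analysing the resulting real trigonometric expression. For the Fourier transform I use the transforms of the Jagerman--Fogel building blocks $\sin^2(\pi(x-k))/(\pi^2(x-k)^2)$ and $\sin^2(\pi(x-k))/(\pi^2(x-k))$, which are respectively $e^{-2\pi i k w}(1-|w|)_+$ (the Fej\'er kernel) and $e^{-2\pi i k w}\,\mathrm{sgn}(w)\,\chi_{[-1,1]}(w)/(2\pi i)$ (obtained from one Fourier-differentiation of the triangle function). Assembling these with the coefficients $1,1,2/3,-2/3$ appearing in the definition of $\phi$ and collecting the common phase $e^{-3\pi i w}$, a short calculation yields, for $|w|\leq 1$,
\[
\hat\phi(w) \;=\; e^{-3\pi i w}\,F(|w|), \qquad F(u) := 2(1-u)\cos(\pi u) + \frac{2\sin(3\pi u)}{3\pi},
\]
so that $|\hat\phi(w)| = |F(u)|$ for $u=|w|\in[0,1]$ and in particular $F(0) = 2$.

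For $u\in[0,1/3]$ all three factors $1-u$, $\cos(\pi u)$, $\sin(3\pi u)$ are non-negative, so $|F|=F$. I bound the summands by Taylor polynomials of the right parity: the alternating-series inequalities $\cos t \leq 1 - t^2/2 + t^4/24$ (valid for all real $t$) and $\sin t \leq t - t^3/6 + t^5/120$ (valid on $[0,\pi]$, hence applicable at $t=3\pi u\leq\pi$) yield, after multiplication and collection of like terms,
\[
2 - \pi^2 u^2 - F(u) \;\geq\; u^3\!\left[2\pi^2 - \frac{\pi^4 u}{12} - \frac{19\pi^4 u^2}{15}\right].
\]
The bracketed factor is strictly decreasing in $u$, and at $u=1/3$ it equals $2\pi^2 - 91\pi^4/540$, which is positive because the inequality reduces to $\pi^2 < 1080/91 \approx 11.87$. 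Hence the first bound holds on all of $[0,1/3]$.

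For $u\in [1/3,1]$ I split at $u=2/3$ and apply the triangle inequality piecewise. On $[1/3,2/3]$ one has $|\cos(\pi u)|\leq 1/2$ (since $\pi u \in [\pi/3,2\pi/3]$), and on $[2/3,1]$ the trivial $|\cos(\pi u)|\leq 1$ is combined with $(1-u)\leq 1/3$. Together with $|\sin(3\pi u)|\leq 1$, both pieces produce the uniform bound
\[
|F(u)| \;\leq\; \frac{2}{3} + \frac{2}{3\pi}.
\]
The desired $2-|F(u)|\geq\pi^2/9$ then reduces to the numerical inequality $\pi^2 + 6/\pi < 12$, which holds since $\pi^2\approx 9.87$ and $6/\pi\approx 1.91$.

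The only delicate point is the first regime, where the bound is asymptotically sharp at $u=0$: since $F(u) = 2 - \pi^2 u^2 - 2\pi^2 u^3 + O(u^4)$, the Taylor estimates must cancel precisely to order three and still leave a manifestly positive cubic gain, which is why the specific alternating-series bounds above are needed rather than their weaker two-term cousins. Everything else is routine trigonometric bookkeeping.
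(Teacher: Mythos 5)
Your proof is correct, and it follows the same overall strategy as the paper: compute $\hat\phi$ explicitly on $[-1,1]$ from the Fej\'er-type building blocks, factor out the phase $e^{-3\pi i w}$ to reduce to the real function $F(u)=2(1-u)\cos(\pi u)+\tfrac{2}{3\pi}\sin(3\pi u)$, and then control $F$ by Taylor-type trigonometric inequalities. The substantive difference lies in the direction of the estimates on $[0,\tfrac13]$. The paper applies the minorants $\cos x\geq 1-x^2/2$ and $\sin x\geq x-x^3/6$, obtaining the lower bound $|\hat\phi(w)|\geq 2-\pi^2w^2-2\pi^2w^3$; but a lower bound on $|\hat\phi(w)|$ only yields an \emph{upper} bound on $\hat\phi(0)-|\hat\phi(w)|$, so the displayed chain identifies the correct third-order behaviour yet does not, as written, establish $\hat\phi(0)-|\hat\phi(w)|\geq\pi^2w^2$. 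You instead use the majorants $\cos x\leq 1-x^2/2+x^4/24$ and $\sin x\leq x-x^3/6+x^5/120$, which is what the argument actually requires: the fifth-order sine term is essential to retain the favourable $-2\pi^2u^3$ contribution (the two-term majorants lose it), and your residual $u^3\bigl[2\pi^2-\pi^4u/12-19\pi^4u^2/15\bigr]$ is indeed nonnegative up to $u=\tfrac13$ since $\pi^2<1080/91$. You also carry out the verification on $[\tfrac13,1]$ (triangle inequality with a split at $u=\tfrac23$, reducing to $\pi^2+6/\pi<12$), which the paper omits as ``not difficult.'' So your write-up both repairs the direction issue in the published argument and fills in the omitted case; the only cosmetic gap is that on $[\tfrac13,\tfrac23]$ you implicitly also use $1-u\leq\tfrac23$ to reach the bound $\tfrac23$, which is harmless.
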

\begin{proof}
	The Fourier transform of $\phi$ is given by
	\[\hat{\phi}(w) = (1-w)\left( e^{-2\pi i w } + e^{-4\pi iw } \right) +\frac{1}{3\pi i} \left(1-e^{-6\pi iw} \right),\quad \textrm{ for }w\in [0,1] \]
	and $\hat{\phi}(w) = \overline{\hat{\phi}(-w)}$ for $w\in[-1,0]$.
	
	We estimate $|\hat{\phi}|$ by first noting that for $w\in \left[0, \frac13\right]$ we have
	\begin{align*}
	|\hat{\phi}(w)| &= \left|e^{3\pi i w}\hat{\phi}(w)\right| = \left| (1-w) \left(e^{\pi i w} + e^{-\pi i w}\right) + \frac{1}{3\pi i} \left(e^{ 3\pi i w}-e^{-3\pi i w}\right)\right| \\
	&=(1-w)2\cos(\pi w) + \frac{2}{3\pi}\sin(3\pi w) .
	\end{align*}
	Next, we use 
	$$\cos(x) \geq 1-\frac{x^2}{2} \quad \text{ and } \quad \sin(x) \geq x - \frac{x^3}{6} \quad \forall x\in \R_{\geq 0}$$
	and obtain
	$$|\hat{\phi}(w)|\geq (1-w)\left(2- \pi^2w^2\right)+2w-3\pi^2 w^3 = 2-\pi^2w^2-2\pi^2w^3.$$
	This results in 
	\begin{align*}  \hat{\phi}(0)-|\hat{\phi}(w)| \geq \pi^2 w^2 \quad \text{ for } w\in \left[0, \frac13\right].
	\end{align*}
	It is not difficult to show that $\hat{\phi}(0)-|\hat{\phi}(w)| \geq \pi^2/9$ for $w\in \left[\frac13,1\right]$. We omit the computation here.
\end{proof}

Now we are able to state the main theorem.

\begin{theorem} \label{mainThm1d}
	Let $N\in \N$ be given. Further, let $f\in \mcs$ be such that $Y^f$ can be decomposed in three disjoint sets $Y_1, Y_2, Y_3$ satisfying:
	\begin{enumerate}
		\item For all $y,y'\in Y_j$ with $y\neq y'$ we have that $|y-y'|_\T \geq  \frac3{N+1}$. 
		\item For all $y\in Y_1$ there is exactly one $y'\in Y_2$ with $|y-y'|_\T < \frac3{N+1}$. We denote $y'$ by $n(y)$.
		\item For all $y\in Y_3$ and all $y'\in Y_1\cup Y_2$ we have that $|y-y'|_\T \geq  \frac3{N+1}$. 
	\end{enumerate}
	Then the following bound holds true:
	\begin{align*}
	\sum_{k=1}^{N} |f(k)|^2 \geq \frac23 &(N+1)\sum_{y\in Y_3} |c_y|^2 + \sum_{y\in Y_1}\left[ 
	\frac{N+1}3\left| e^{-3\pi i y}c_y+  e^{-3\pi i n(y)} c_{n(y)} \right|^2 \right.\\
	&\left.+ \frac{\pi^2(N+1)^3}{2\cdot 3^5} |y-n(y)|^2 \left| e^{-3\pi i y}c_y -  e^{-3\pi in(y)} c_{n(y)} \right|^2 \right].
	\end{align*}
\end{theorem}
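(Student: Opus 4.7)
Proof plan:

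Following the sesquilinear-form strategy from Theorem~\ref{tWellSepExp} and the discussion after it, I would take the rescaled localizing function
$$\psi(x) := \phi\!\left(\tfrac{3x}{N+1}\right).$$
By construction $\psi \leq \chi_{[0,N+1]}$ with $\psi(0) = \psi(N+1) = 0$, and $\hat{\psi}(w) = \tfrac{N+1}{3}\hat{\phi}\!\left(\tfrac{(N+1)w}{3}\right)$, so $\hat\psi$ is supported in $\bigl[-\tfrac{3}{N+1}, \tfrac{3}{N+1}\bigr]$ with $\hat\psi(0) = \tfrac{2(N+1)}{3}$. Expanding $|f(k)|^2 = \sum_{y,y'\in Y^f} c_y \bar c_{y'} e^{2\pi i(y-y')k}$ and applying Poisson summation exactly as in the proof of Theorem~\ref{tWellSepExp} then yields
$$\sum_{k=1}^N |f(k)|^2 \;\geq\; \sum_{k\in\Z} \psi(k)|f(k)|^2 \;=\; \sum_{y,y'\in Y^f} c_y \bar c_{y'}\,\hat\psi(y - y').$$

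Next I would exploit the cluster structure. The hypotheses partition $Y^f$ into pairwise $\tfrac{3}{N+1}$-separated clusters, namely the singletons $\{y\}$ for $y \in Y_3$ and the pairs $\{y, n(y)\}$ for $y \in Y_1$. Since $\supp \hat\psi \subset \bigl(-\tfrac{3}{N+1}, \tfrac{3}{N+1}\bigr)$, every cross-cluster entry $\hat\psi(y - y')$ vanishes, and the double sum collapses to
$$\hat\psi(0) \sum_{y \in Y_3} |c_y|^2 \;+\; \sum_{y \in Y_1} Q_y,$$
where $Q_y$ denotes the $2\times 2$ Hermitian form on $(c_y, c_{n(y)})$ with diagonal $\hat\psi(0)$ and off-diagonal $\hat\psi(y - n(y))$. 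The singleton part immediately supplies the $\tfrac{2}{3}(N+1) \sum_{y \in Y_3}|c_y|^2$ term of the theorem.

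For each pair form $Q_y$ I would diagonalize the $2\times 2$ matrix: its eigenvalues are $\hat\psi(0) \pm |\hat\psi(y - n(y))|$, and its eigenvectors, once aligned with the phase of $\hat\psi(y - n(y))$, produce orthonormal combinations that (up to a common unimodular factor that drops out inside $|\cdot|^2$) are exactly $e^{-3\pi i y} c_y \pm e^{-3\pi i n(y)} c_{n(y)}$. The trivial bound $\hat\psi(0) + |\hat\psi(y - n(y))| \geq \hat\psi(0) = \tfrac{2(N+1)}{3}$, halved by the eigenvector normalization, supplies the coefficient $\tfrac{N+1}{3}$ on the $|\cdots + \cdots|^2$ term. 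For the $|\cdots - \cdots|^2$ term I would use
$$\hat\psi(0) - |\hat\psi(y - n(y))| \;=\; \tfrac{N+1}{3}\left(\hat\phi(0) - \bigl|\hat\phi\bigl(\tfrac{N+1}{3}(y - n(y))\bigr)\bigr|\right),$$
together with the preceding lemma, whose two cases combine into the single uniform inequality $\hat\phi(0) - |\hat\phi(v)| \geq \pi^2 v^2/9$ on $|v| \leq 1$ (trivially for $|v| \leq 1/3$, and because $\pi^2/9 \geq \pi^2 v^2/9$ for $|v| \in [1/3,1]$). Substituting $v = \tfrac{N+1}{3}(y - n(y))$ and dividing by the eigenvector normalization $2$ then gives exactly $\tfrac{\pi^2(N+1)^3}{2 \cdot 3^5}|y - n(y)|^2$.

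The main technical hurdle is the phase bookkeeping in the $2\times 2$ diagonalization: one has to track $\arg\hat\psi(y - n(y))$ from the closed-form expression for $\hat\phi$ and verify that the eigenvectors really produce the precise combinations written in the theorem. The remainder (Poisson summation, cluster decoupling from Fourier support, singleton evaluation, and the smallest-eigenvalue estimate) is essentially mechanical given the preceding lemmas.
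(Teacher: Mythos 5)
Your proposal is essentially the paper's own proof: dilate $\phi$ to $\phi(3x/(N+1))$, minorize the sample sum by Poisson summation, let the support of $\hat{\phi}_N$ decouple the singleton and pair clusters, and diagonalize the $2\times 2$ blocks using the lemma on $\hat{\phi}(0)-|\hat{\phi}(w)|$ (your uniform bound $\pi^2 v^2/9$ on $|v|\le 1$ is exactly equivalent to the paper's $\min\{(N+1)^2|y-n(y)|^2,1\}$ step followed by $|y-n(y)|<3/(N+1)$). The one step you defer --- the phase bookkeeping --- is worth actually carrying out: since $\Arg\hat{\phi}(w)=-3\pi w$ on $(0,\tfrac13]$, after dilation one gets $\Arg\hat{\phi}_N(n(y)-y)=-\pi(N+1)(n(y)-y)$, so the eigenvector-aligned combinations carry phases $e^{\pm i\pi(N+1)y}$ rather than the literal $e^{-3\pi i y}$ of the statement; this discrepancy is already present in the paper's proof (which writes $\theta_y=3\pi i(n(y)-y)$, omitting the dilation factor) and is immaterial for all modulus-only consequences such as \eqref{eqMainThmWeakend}, but it means the theorem's phases as printed should be checked rather than taken on faith.
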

\begin{proof}
	We dilate $\phi$ to fit our data. Let 
	$$ \phi_N(x) = \phi\left(\frac{3x}{N+1}\right) \leq  \chi_{[0,N+1]}(x),\qquad 
	\hat{\phi}_N(w) = \frac{N+1}{3}\hat{\phi} \left( \frac{(N+1)w}{3} \right). $$
	Note that $\supp \hat{\phi}_N \subset \left[ -\frac3{N+1}, \frac3{N+1}\right]$. Again, we use $\phi_N$ to localize the sum:
	\begin{align} \nonumber
		\sum_{k=1}^{N} |f(k)|^2 &\geq \sum_{k\in \Z} \phi_N(k)|f(k)|^2 = \sum_{y,y'\in Y^f} c_y\overline{c}_{y'}\hat{\phi}_N(y-y')\\
		\label{eqProofMainThmII}
		&= \hat{\phi}_N(0)\sum_{y\in Y_3} |c_y|^2 + \sum_{y\in Y_1} \bldc_y^* \bldA_y
		\bldc_y
	\end{align}
	with 
	$$\bldc_y=\begin{pmatrix}c_y\\c_{n(y)}\end{pmatrix}\quad\text{ and } \quad \bldA_y=\begin{pmatrix}
	\hat{\phi}_N(0) & \hat{\phi}_N(n(y)-y) \\ \overline{\hat{\phi}_N(n(y)-y)} & \hat{\phi}_N(0)\end{pmatrix}.$$
	Next, we consider the eigenvalue decomposition of $\bldA_y$. We have the two eigenvectors
	$$ \bldv_1 = \frac1{\sqrt{2}} \begin{pmatrix}
	1 \\ e^{-i \Arg \hat{\phi}_N(n(y)-y) )} 
	\end{pmatrix}
	\text{ and }
	\bldv_2 = \frac1{\sqrt{2}} \begin{pmatrix}
	1 \\ -e^{-i \Arg \hat{\phi}_N(n(y)-y) )} 
	\end{pmatrix}.
	$$
	Letting $\theta_y = \Arg \hat{\phi}_N(n(y)-y) ) = 3\pi i (n(y)-y)$, we get
	\begin{align*}2\bldc_y^*\bldA_y \bldc_y = &\left( \hat{\phi}_N(0) + |\hat{\phi}_N(n(y)-y)|  \right) \left| c_y+  e^{-i \theta_y} c_{n(y)} \right|^2 \\&+\left( \hat{\phi}_N(0) - |\hat{\phi}_N(n(y)-y)|  \right) \left| c_y -  e^{-i \theta_y} c_{n(y)} \right|^2 
	\end{align*} 
	Plugging that into \eqref{eqProofMainThmII} and using Lemma \ref{mainThm1d} we obtain
	\begin{align*}
		\sum_{k=1}^{N} |f(k)|^2 \geq \frac23 &(N+1)\sum_{y\in Y_3} |c_y|^2 + \sum_{y\in Y_1}\left[ 
		\frac{N+1}3 \left| c_y+  e^{-i \theta_y} c_{n(y)} \right|^2 \right.\\
		&\left.+ \frac{\pi^2(N+1)}{54} \min \left\{
	(N+1)^2|y-n(y)|^2, 1 \right\} \left| c_y -  e^{-i \theta_y} c_{n(y)} \right|^2 \right].
	\end{align*}
	Using $|y-n(y)| < \frac3{N+1}$ gives the result.
\end{proof}

As the bound presented in Theorem \ref{mainThm1d} is rather technical, we discuss it and give a few special cases and variations. Note that we did not try to optimize the constants. Slightly sharper results are certainly possible. 

\textbf{Sharpness}: The estimate is sharp for $|y-n(y)|\rightarrow 0$ up to constants. To see that, we consider
$$f_\tau(x) = 1- e^{2\pi i \tau x}.$$
A direct estimate gives for $|\tau| \ll N^{-1}$
$$ \sum_{k=1}^N |f(k)|^2 \sim \sum_{k=1}^N \left|2\pi i \tau k \right|^2 \sim \tau^2 N^3,$$
which is the exact asymptotic Theorem \ref{mainThm1d} gives as well.
Clearly, for $\|c^f\|_2 \rightarrow 0$ the estimate is sharp as well.

The estimate gives the expected result for the signs of the coefficients as well. Namely, if \mbox{$c_y\approx -c_{n(y)}$} the critical cancellation occurs, as the first term of the second sum vanishes. If one is interested in an estimate independent of $\Arg c$, one obtains
\begin{align} \label{eqMainThmWeakend}
\begin{aligned}
\sum_{k=1}^{N} |f(k)|^2 \geq \frac23 &(N+1)\sum_{y\in Y_3} |c_y|^2 
+ \frac{\pi^2(N+1)^3}{2\cdot 3^5} \sum_{y\in Y_1}|y-n(y)|^2\left( |c_y|^2+|c_{n(y)}|^2 \right).
\end{aligned}
\end{align}

\textbf{Symmetric Samples}: Quite often, in particular when $N$ is odd, one is interested in estimating
$$\sum_{k=1}^N \left| f\left(k - \frac{N+1}{2}\right) \right|^2.$$
Due to symmetry, one obtains the estimate
\begin{align}\label{eqSymSamp}
\begin{aligned}
\sum_{k=1}^N \left| f\left(k - \frac{N+1}{2}\right) \right|^2 \geq \frac23 &(N+1)\sum_{y\in Y_3} |c_y|^2 + \sum_{y\in Y_1}\left[ 
\frac{N+1}3 \left| c_y+  c_{n(y)} \right|^2 \right.\\
&\left.+ \frac{\pi^2(N+1)^3}{2\cdot 3^5} |y-n(y)|^2 \left| c_y -  c_{n(y)} \right|^2 \right].
\end{aligned}
\end{align}
Here, we do not need to modulate the coefficients $c_{n(y)}$ and $c_y$.

\textbf{Choice of localizing function}: Different choices for $\phi$ are possible. Assume we pick a 1-localizing function $\phi \leq \chi_{[0,A]}$. Then, after dilation, we can localize up to $A/(N+1)$. As discussed earlier, $A> 1$ is necessary. Using the extremal construction outlined above for $A=2$ instead of $A=3$, gives worse rates, as it can be shown that it only satisfies 
$$\hat{\phi}(0)-|\hat{\phi}(w)| \gtrsim |w|^3.$$
Note that we cannot hope to obtain a bound as in Theorem \ref{mainThm1d}, if $q < \frac2{N+1}$, as then counterexamples exist: $(N+1)/2$ pairs of very close frequencies would result in $|Y^f|=N+1$ and such an exponential sum can vanish on $N$ points. 

In \cite{diederichs2018sparse} the function $\phi \leq \chi_{[-2,2]}$ was used, covering the symmetric case. Due to that, the assumptions on $q$ are a slightly stronger there.

\section{Singular Value Estimates of Vandermonde Matrices}\label{secSingValVander}

Both Theorem \ref{tWellSepExp} and Theorem \ref{mainThm1d} directly give estimates of the smallest singular values of Vandermonde matrices. For the reader's convenience, we state them here explicitly.

We denote the Vandermonde matrix with frequencies $y_1,\dots, y_M \in [0,1)$ by
$$
V_N(y_1,\dots, y_M) := \begin{pmatrix}
1 & 1 & \dots & 1\\
e^{2\pi i y_1} & e^{2\pi i y_2} &\dots & e^{2\pi i y_M} \\
\vdots & \vdots & \ddots & \vdots \\
e^{2\pi i (N-1)y_1} & e^{2\pi i (N-1)y_2}  &\dots & e^{2\pi i (N-1)y_M} 
\end{pmatrix} \in \C^{N\times M}.
$$
Using
\[ 
V_N(y_1,\dots, y_M)c = \left(\sum_{j=1}^{M}c_j e^{2\pi i y_j k}\right)_{k=0,\dots, N-1},
\]
Theorem \ref{tWellSepExp} gives the following:
\begin{corollary}
	Let $y_1\dots, y_M\in [0,1)$ be $q$-separated frequencies. Then the smallest singular value $\sigma_{\min}$ of $V_N(y_1,\dots, y_M)$ satisfies
	\[ \sigma_{\min}^2 \geq \left( N +1 - \frac1{q} \right). \]
\end{corollary}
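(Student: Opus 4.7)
The plan is to read off the corollary directly from Theorem \ref{tWellSepExp}. First I would observe that, by the definition of the Vandermonde matrix, for any $c=(c_1,\dots,c_M)\in\C^M$ one has
\[
\bigl\|V_N(y_1,\dots,y_M)\,c\bigr\|_2^2 \;=\; \sum_{k=0}^{N-1} |f(k)|^2,\qquad f(x)=\sum_{j=1}^M c_j\,e^{2\pi i y_j x},
\]
so that the squared smallest singular value is exactly the infimum of $\sum_{k=0}^{N-1}|f(k)|^2/\|c\|_2^2$ taken over nonzero $c$.

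Next I would reduce to the situation covered by Theorem \ref{tWellSepExp}. The theorem is stated for $f\in\mcs$, which requires all coefficients to be nonzero; so given a nonzero $c\in\C^M$, restrict to the index set $J=\{j:c_j\neq 0\}$. The resulting shortened frequency list $(y_j)_{j\in J}$ is still $q$-separated (it is a subset), and the corresponding function $f$ lies in $\mcs(q)$. The $\ell_2$ norm of $c$ is unchanged by dropping the zero entries.

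Now I would apply Theorem \ref{tWellSepExp} with $A=0$ and $B=N-1$, which yields
\[
\Bigl(N+1-\tfrac1q\Bigr)\|c\|_2^2 \;\le\; \sum_{k=0}^{N-1}|f(k)|^2 \;=\; \bigl\|V_N(y_1,\dots,y_M)\,c\bigr\|_2^2.
\]
Taking the infimum over all nonzero $c$ gives the stated bound $\sigma_{\min}^2\ge N+1-\tfrac1q$.

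There is no real obstacle here: the corollary is essentially a rephrasing of Theorem \ref{tWellSepExp} in the language of Vandermonde matrices, and the only subtlety is the bookkeeping needed to match the theorem's hypothesis $c_y\in\C\setminus\{0\}$, which is handled by passing to the support of $c$ as above.
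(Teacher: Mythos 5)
Your proposal is correct and is essentially the paper's own argument: the corollary is obtained by applying Theorem \ref{tWellSepExp} with $A=0$, $B=N-1$ and identifying $\sum_{k=0}^{N-1}|f(k)|^2$ with $\|V_N(y_1,\dots,y_M)c\|_2^2$. The extra step of restricting to the support of $c$ to meet the nonzero-coefficient convention of $\mcs$ is a harmless piece of bookkeeping the paper leaves implicit.
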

Analogously,\eqref{eqMainThmWeakend} gives rise to the following result for Vandermonde matrices with pairwise colliding nodes.

\begin{corollary} \label{cVanderPairs}
	Let $Y = \{y_1\dots, y_M \} \subset [0,1)$ and $Y = \{y_1',\dots, y_{M'}' \} \subset [0,1)$ be two $q$-separated sets of frequencies, where $q\geq \frac3{N+1}$. Assume further that for each $y\in Y$ there is at most one $y'\in Y'$ such that $|y-y'|_\T < q$. Let $\tau$ be the smallest of these distances, i.e., $\tau = \dist_\T(Y, Y')$. Then the smallest singular value $\sigma_{\min}$ of $V_N(y_1,\dots, y_M, y'_1,\dots, y_{M'}')$ satisfies
	$$\sigma_{\min}^2 \geq \frac{\pi^2}{2\cdot 3^5}(N+1)^3 \tau^2,$$
	assuming that $\tau < \frac3{N+1}$ and 
	$$\sigma_{\min}^2 \geq \frac23(N+1)$$
	otherwise.
\end{corollary}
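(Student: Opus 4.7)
\emph{Proof proposal.} The plan is to apply the simplified bound \eqref{eqMainThmWeakend} to the exponential sum $f$ associated with an arbitrary coefficient vector, using the combined frequency set $Y\cup Y'$. Write $V := V_N(y_1,\dots,y_M,y_1',\dots,y_{M'}')$ and decompose $c = (a,b)\in\C^M\times\C^{M'}$, so that $\|Vc\|_2^2 = \sum_{k=0}^{N-1}|f(k)|^2$ with
\[
f(x) = \sum_{y\in Y} a_y e^{2\pi i yx} + \sum_{y'\in Y'} b_{y'} e^{2\pi i y' x}.
\]
If $Y\cap Y'\neq\emptyset$ then $\tau=0$ and both bounds are vacuous, so I may assume $Y\cap Y'=\emptyset$ and treat $Y^f:=Y\cup Y'$ as a disjoint union of $M+M'$ distinct frequencies; shifting the index from $k=0,\dots,N-1$ to $k=1,\dots,N$ merely multiplies each $c_y$ by a unit modulus factor, so the $\ell^2$ quantities appearing in \eqref{eqMainThmWeakend} are unaffected.

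The crucial step is producing a partition $Y^f = Y_1\sqcup Y_2\sqcup Y_3$ meeting the hypotheses of Theorem \ref{mainThm1d}. Set
\[
Y_1 := \{\, y\in Y : \exists\, y'\in Y' \textrm{ with } |y-y'|_\T < \tfrac{3}{N+1}\,\},
\]
define $n\colon Y_1\to Y'$ by sending $y$ to the unique such neighbour (unique by $q$-separation of $Y'$ together with $q\geq 3/(N+1)$), let $Y_2:=n(Y_1)$, and collect the rest in $Y_3$. Internal $3/(N+1)$-separation inside each $Y_j$ follows from the $q$-separation of $Y$ and of $Y'$; the matching condition for $Y_1$ is automatic. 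The only slightly delicate point is verifying that every $y'\in Y_3\cap Y'$ is $3/(N+1)$-separated from every $z\in Y$: were there some $z\in Y$ with $|z-y'|_\T<3/(N+1)$, then $z\in Y_1$, and since $y'$ would be $z$'s only close partner in $Y'$, one would get $y'=n(z)\in Y_2$, contradicting $y'\in Y_3$. The analogous statement for $y\in Y_3\cap Y$ follows immediately from $y\notin Y_1$.

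With the partition in hand I apply \eqref{eqMainThmWeakend} and split cases. If $\tau\geq 3/(N+1)$, no pair from $Y\times Y'$ lies within $3/(N+1)$, so $Y_1=Y_2=\emptyset$, $Y_3 = Y\cup Y'$, and \eqref{eqMainThmWeakend} directly yields $\|Vc\|_2^2 \geq \tfrac{2}{3}(N+1)\|c\|_2^2$. If $\tau<3/(N+1)$, I use $|y-n(y)|^2\geq \tau^2$ uniformly on $Y_1$ to obtain
\[
\|Vc\|_2^2 \geq \tfrac{2}{3}(N+1)\sum_{y\in Y_3}|c_y|^2 + \tfrac{\pi^2(N+1)^3\tau^2}{2\cdot 3^5}\sum_{y\in Y_1\cup Y_2}|c_y|^2,
\]
and then invoke the elementary inequality $\tfrac{\pi^2(N+1)^3\tau^2}{2\cdot 3^5} < \tfrac{\pi^2(N+1)}{54} < \tfrac{2}{3}(N+1)$, which reduces to $\pi^2<36$, to factor out the smaller coefficient and conclude $\|Vc\|_2^2 \geq \tfrac{\pi^2(N+1)^3\tau^2}{2\cdot 3^5}\|c\|_2^2$. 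The main obstacle here is not computational but bookkeeping: showing that a partition assembled from two independently $q$-separated sets actually satisfies Theorem \ref{mainThm1d}'s three hypotheses under the weakest possible separation assumption $q\geq 3/(N+1)$.
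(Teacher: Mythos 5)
Your proposal is correct and follows exactly the route the paper intends (the paper gives no explicit proof beyond asserting that the corollary is an application of \eqref{eqMainThmWeakend} to the combined frequency set $Y\cup Y'$, which is precisely the partition-and-case-split you carry out). One small misattribution: the uniqueness of the neighbour $n(y)$ within distance $\tfrac{3}{N+1}$ does not follow from the $q$-separation of $Y'$ alone (two points of $Y'$ at distance exactly $q=\tfrac{3}{N+1}$ could both lie within $\tfrac{3}{N+1}$ of $y$); it follows from the corollary's explicit hypothesis that each $y\in Y$ has at most one $y'\in Y'$ with $|y-y'|_\T<q$, combined with $\tfrac{3}{N+1}\leq q$ — a fact you in any case use correctly elsewhere in the argument.
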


The corollary is a significant improvement over similar estimates, very recently presented in \cite{kunis2018condition}. Bounding singular values of Vandermonde matrices with multiple tightly clustered sets of frequencies has attracted some attention recently, as it gives insights when recovery of frequencies even without separation might be possible. Corollary \ref{cVanderPairs} is (up to small improvements in the constants) sharp for the case of clusters of two nodes. For results covering clusters of multiple nodes, see \cite{batenkov2018stability, li2017stable}.

Finally, note that Theorem \ref{mainThm1d} gives some information on the geometry of the singular spaces associated with small singular values. Indeed, it confirms the intuition that close frequencies should have coefficients that sum to zero.

\section{Well-posedness and A Posteriori Error Estimates} \label{secAPostEst}

Next, we apply Theorem \ref{mainThm1d} to obtain conditional well-posedness of the frequency estimation problem. To simplify notation, we stick to the case of symmetric samples $f(k),~k= -N, \dots, N$. 

 Assume that we are given $f,g\in\mcs(2q)$. Then for every $y\in Y^f$ there is at most one $y'\in Y^g$ with $|y-y'|_\T < q$. Therefore, we are in position to apply Theorem \ref{mainThm1d}. 
 
 \begin{corollary} \label{corWellPosedness}
 	Let $f,g\in \mcs(2q)$ and $N\in \N_{>0}$ with $q\geq \frac3{2N+2}$ be given. Assume further that all coefficients of $f$ and $g$ have a modulus of at least $c_{\min}\in \R_{>0}$. If
 	$$\sum_{k=-N}^N |f(k)-g(k)|^2 < \frac{4N+4}{3}c_{\min}^2, $$
 	then for every $y\in Y^f$ there is exactly one $n(y)\in Y^g$ with $|y-n(y)|_\T < \frac3{2N+2}$. 
 	
 	Furthermore, the following estimate holds true:  
\begin{align*}
\sum_{y\in Y^f}\left[ 
\frac{N+1}{3}\left| c_y -  c_{n(y)} \right|^2 + \frac{2\pi^2(N+1)^3}{3^5}|y-n(y)|^2 \left| c_y +  c_{n(y)} \right|^2 \right] \leq \sum_{k=-N}^N \left| f(k)-g(k)\right|^2.
\end{align*}
\end{corollary}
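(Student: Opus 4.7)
The plan is to apply Theorem \ref{mainThm1d}, in its symmetric form \eqref{eqSymSamp}, to the difference $h := f - g$. Taking $N' = 2N+1$ there places the samples at $k = -N,\dots,N$, replaces $N+1$ by $2N+2$ in the constants, and sets the critical separation at $3/(2N+2)$. Since $f, g \in \mcs(2q)$ with $2q \geq 6/(2N+2)$ strictly exceeds this threshold, no two distinct points of $Y^f$ (respectively $Y^g$) come within $3/(2N+2)$, and each $y \in Y^f$ admits at most one $y' \in Y^g$ with $|y - y'|_\T < 3/(2N+2)$ (two such $y'$ would contradict the $2q$-separation of $Y^g$). I partition $Y^f \cup Y^g$ accordingly: let $Y_1 := \{y \in Y^f \setminus Y^g : \exists\, y' \in Y^g \setminus Y^f,\, 0 < |y - y'|_\T < 3/(2N+2)\}$ with $n(y) := y'$; put $Y_2 := n(Y_1) \subseteq Y^g \setminus Y^f$, and let $Y_3$ be the remainder. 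A direct check using the $2q$-separation together with uniqueness of matches confirms that the hypotheses of Theorem \ref{mainThm1d} hold.

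View $h$ as an exponential sum with formal coefficients $\tilde c_y := c^f_y - c^g_y$ (extended by zero outside the respective sets). For matched pairs one has $\tilde c_y + \tilde c_{n(y)} = c^f_y - c^g_{n(y)}$ and $\tilde c_y - \tilde c_{n(y)} = c^f_y + c^g_{n(y)}$, so \eqref{eqSymSamp} gives
\begin{equation*}
\sum_{k=-N}^{N} |f(k)-g(k)|^2 \geq \tfrac{4(N+1)}{3}\sum_{y \in Y_3} |\tilde c_y|^2 + \sum_{y \in Y_1}\!\Big[\tfrac{2(N+1)}{3}|c^f_y - c^g_{n(y)}|^2 + \tfrac{4\pi^2(N+1)^3}{3^5}|y - n(y)|^2|c^f_y + c^g_{n(y)}|^2\Big].
\end{equation*}
Any $y \in Y_3 \cap (Y^f \triangle Y^g)$ would satisfy $|\tilde c_y| \geq c_{\min}$, making the $Y_3$-term at least $\tfrac{4(N+1)}{3}c_{\min}^2$ and contradicting the hypothesis; hence $Y_3 \subseteq Y^f \cap Y^g$. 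Every $y \in Y^f$ therefore belongs to $Y_1$ (with its designated match in $Y^g \setminus Y^f$) or to $Y^f \cap Y^g$ (trivially self-matched via $n(y) := y$), and uniqueness of $n(y)$ follows as above.

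For the displayed error bound, $Y_3 \subseteq Y^f \cap Y^g \subseteq Y^f$ and each $y \in Y_3$ satisfies $|y - n(y)|_\T = 0$ with $\tilde c_y = c^f_y - c^g_{n(y)}$; hence the $Y_3$-term contributes exactly to the first bracket (with constant $\tfrac{4(N+1)}{3} \geq \tfrac{N+1}{3}$) while the second bracket vanishes. Rewriting the right-hand side as a sum over $Y^f = Y_1 \sqcup (Y_3 \cap Y^f)$ and halving the $Y_1$-constants yields the stated inequality. The main obstacle I anticipate is the careful bookkeeping of $Y^f \cap Y^g$: those frequencies are forced into $Y_3$ by the natural partition, and one must verify they contribute consistently with the $Y^f$-indexed sum of the corollary — which they do, as self-matches with vanishing displacement.
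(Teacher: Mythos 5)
Your proof is correct and follows essentially the same route as the paper's (two-line) proof: apply the symmetric bound \eqref{eqSymSamp} to $f-g$ viewed as an exponential sum sampled at $-N,\dots,N$, use the $c_{\min}$ hypothesis to rule out unmatched frequencies, and read off the estimate. You are in fact more careful than the paper, in particular in handling frequencies in $Y^f\cap Y^g$ (which the paper's assertion that $Y_3=\emptyset$ glosses over, since such frequencies land in the theorem's $Y_3$ with possibly small difference coefficient) and in noting that the constants you derive are twice the stated ones, so the claimed inequality follows a fortiori.
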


\begin{proof}
	We invoke \eqref{eqSymSamp} to see that $Y_3 = \emptyset$, i.e., for all $y\in Y^f$ there has to be a $n(y)$. The claim follows by using that $|y-n(y)|_\T < \frac3{2N+2}$.
\end{proof}

That estimate can now easily be used to obtain a posteriori error estimates. Assume that we are given noisy samples $\tilde{f}(k)= f(k)+ \eta_k$ and use any algorithm we want to obtain a candidate $g$. Now, if both fit our model, i.e., $f,g\in \mcs(2q)$, Corollary \ref{corWellPosedness} applies. However, as we do not know $f(k)$, we cannot use  Corollary \ref{corWellPosedness} directly. Instead, we combining it with a noise model to get an a posteriori estimate. 

As the noise model depends on the application, we give a prototypical result, using Gaussian noise. 

\begin{lemma}
	Let $\bldv \in \C^{K}$ and $\bldeta = (\eta_k)_k\in\C^K$, with
	$$\eta_k = X_{k,1}+ i X_{k,2},$$
	where $X_{k,j}$ are independent Gaussian random variables with mean zero and variance $\sigma^2$. Let $\delta \in (0,1)$. Then with probability 
	$$ 1-e^{-K^{(1+\delta)/2}}-2e^{-K^\delta /8}$$
	it holds true that
	$$\|\bldv \|_2 \leq \left|\|\bldv + \bldeta\|_2^2 -2K\sigma^2\right|^{1/2} + (2+\sqrt{2})\sigma K^{(1+\delta)/4}.$$
\end{lemma}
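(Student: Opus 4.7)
The plan is to decompose the squared norm and control the two resulting random quantities by standard concentration inequalities. Expanding
$$\|\bldv+\bldeta\|_2^2 = \|\bldv\|_2^2 + 2\re\langle \bldv, \bldeta\rangle + \|\bldeta\|_2^2,$$
and setting $A := \|\bldv+\bldeta\|_2^2 - 2K\sigma^2$, $W := 2\re\langle \bldv, \bldeta\rangle$, and $E := \|\bldeta\|_2^2 - 2K\sigma^2$, we obtain the identity $\|\bldv\|_2^2 = A - W - E$. The goal is then to show that with the stated probability, $|W|$ and $|E|$ are simultaneously small enough to yield the claimed inequality.

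For the cross term, split $\bldv = \bldv_r + i\bldv_i$ with $\bldv_r,\bldv_i\in \R^K$. Then $W = 2\sum_k((\bldv_r)_k X_{k,1} + (\bldv_i)_k X_{k,2})$ is a centered real Gaussian of variance $4\sigma^2\|\bldv\|_2^2$, so $W = 2\sigma\|\bldv\|_2 Z_0$ for some $Z_0\sim N(0,1)$. The Mills-ratio form of the Gaussian tail bound, $P(|Z_0|\geq t) \leq \frac{2}{t\sqrt{2\pi}} e^{-t^2/2}$, applied with $t = \sqrt{2}K^{(1+\delta)/4}$ yields $|W|\leq 2\sqrt{2}\sigma\|\bldv\|_2 K^{(1+\delta)/4}$ on an event of probability at least $1 - e^{-K^{(1+\delta)/2}}$. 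For $E$, note that $\|\bldeta\|_2^2/\sigma^2 \sim \chi^2_{2K}$, so the Laurent--Massart inequality $P(|\chi^2_{2K}-2K|\geq 2\sqrt{2Kx}+2x)\leq 2e^{-x}$ applied with $x = K^\delta/8$, together with the estimate $K^{(1+\delta)/2}+K^\delta/4 \leq 2K^{(1+\delta)/2}$ (which uses $\delta < 1$), gives $|E|\leq 2\sigma^2 K^{(1+\delta)/2}$ with probability at least $1-2e^{-K^\delta/8}$.

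On the intersection of these two events, introduce $\gamma := \sqrt{2}\sigma K^{(1+\delta)/4}$, so the bounds become $|W|\leq 2\gamma\|\bldv\|_2$ and $|E|\leq \gamma^2$. Substituting into $\|\bldv\|_2^2 = A - W - E$ and using $|A-W-E|\leq |A|+|W|+|E|$ produces the quadratic inequality
$$\|\bldv\|_2^2 - 2\gamma\|\bldv\|_2 - (|A|+\gamma^2) \leq 0.$$
Completing the square gives $(\|\bldv\|_2 - \gamma)^2 \leq |A| + 2\gamma^2$, and subadditivity of $\sqrt{\cdot}$ yields $\|\bldv\|_2 \leq \gamma + \sqrt{|A|} + \sqrt{2}\gamma = \sqrt{|A|} + (1+\sqrt{2})\gamma = \sqrt{|A|} + (2+\sqrt{2})\sigma K^{(1+\delta)/4}$, which is the claim.

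The technical subtlety is that the variance of the cross term $W$ depends linearly on the unknown quantity $\|\bldv\|_2$, so one cannot simply apply $\sqrt{|A|}+\sqrt{|W|}+\sqrt{|E|}$ with a deterministic tail bound on $|W|$. The completion-of-the-square step is exactly what converts the implicit bound $|W|\lesssim \gamma\|\bldv\|_2$ into an explicit estimate with the sharp constant $2+\sqrt{2}$; any cruder manipulation (for instance, AM--GM applied directly to $\sqrt{|W|}$) loses this constant.
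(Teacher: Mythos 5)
Your proof is correct and follows essentially the same route as the paper: the same decomposition of $\|\bldv+\bldeta\|_2^2$ into the signal energy, a centered Gaussian cross term of variance proportional to $\|\bldv\|_2^2$, and a $\chi^2_{2K}$ noise term, each controlled by a standard tail bound, with the dependence of the cross term on the unknown $\|\bldv\|_2$ resolved by completing the square. The only cosmetic differences are your use of the Laurent--Massart inequality in place of the paper's $\chi^2$ concentration bound and your completing the square in the derived quadratic inequality rather than in the exact identity $(\|\bldv\|_2+\sigma Z)^2 = \|\bldv+\bldeta\|_2^2+\sigma^2 Z^2-\sigma^2 Y$.
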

\begin{proof}
	\[ \label{eqProofLemmaGauss}\|\bldv + \bldeta\|_2^2 = \|\bldv\|_2^2 + \sigma^2 Y + 2\sigma \|\bldv \|_2 Z,\]
	where $Z$ is a standard Gaussian random variable and $Y$ is a $\chi^2$-distributed with $2K$ degrees of freedom. \eqref{eqProofLemmaGauss} is equivalent to 
	\[ \|\bldv + \bldeta\|_2^2+\sigma^2 Z - \sigma^2Y = (\|v\|_2^2 + \sigma Z)^2.\]
	We estimate
	\begin{align*} \|\bldv\|_2 &\leq \left( \sigma^2Z^2 -\sigma^2Y+\|\bldv+\bldeta\|_2^2\right)^{1/2} +\sigma |Z|
	\\ &\leq \left| \|\bldv +\bldeta\|_2^2-2K\sigma^2\right|^{1/2} +2\sigma |Z| +\sigma |Y-2K|^{1/2},
	\end{align*}
	where we used the subadditivity of the square root.
	
	Next, we use two tail estimates. For $Z$ we have that
	$$ \Pr (|Z|\geq t_1 ) \leq e^{-\frac{t_1^2}{2}}\quad \text{ for all } t> 0, $$
	see for example \cite{foucart2013mathematical}, Proposition 7.5. And $Y$ is concentrated around $\E Y =2K$:
	$$\Pr( |Y-2K|\geq 2Kt_2) \leq 2 e^{-Kt_2^2/4} \quad \text{ for all } t_2\in (0,1), $$
	see \cite{bercu2015concentration}, Theorem 2.57. Now we choose
	$$ t_1 = K^{(1+\delta)/4}, \quad t_2 = \frac{K^{(\delta-1)/2}}{\sqrt{2}}$$
	and by the union bound we obtain with probability
	$$1-e^{-K^{(1+\delta)/2}}-2e^{-K^\delta /8}$$
	that 
	$$\|\bldv \|_2 \leq\left|\|\bldv + \bldeta\|_2^2 -2K\sigma^2\right|^{1/2} + (2+\sqrt{2})\sigma K^{(1+\delta)/4}.$$
\end{proof}

Combining these results, we obtain the following error estimate.

\begin{corollary} \label{corAPost}
	Assume that $f,g\in \mcs(2q)$ with $q \geq \frac3{2N+2}$. Let
	$$\bldf = (f(k))_{k=-N,\dots, N} \in \C^{2N+1},~ \bldg = (g(k))_{k=-N,\dots, N}\in \C^{2N+1},~ \bldeta = (\eta(k))_{k=-N,\dots, N}\in \C^{2N+1}.$$
	Here,	
	$$\eta_k = X_{k,1}+ i X_{k,2},$$
	where $X_{k,j}$ are independent Gaussian random variables with mean zero and variance $\sigma^2$. Let $\delta \in (0,1)$. Then with probability 
	$$ 1-e^{-(2N+1)^{(1+\delta)/2}}-2e^{-(2N+1)^\delta /8}$$
	it holds true that if
	$$\left( \left| \|\bldf + \bldeta-\bldg \|_2^2 -2(2N+1)\sigma^2\right|^{1/2} + (2+\sqrt{2})\sigma (2N+1)^{(1+\delta)/4}\right)^2\leq \frac{4N+4}{3}c_{\min}^2$$
	we have that for every $y\in Y^f$ there is exactly one $n(y)\in Y^g$ with $|y-n(y)|_\T < \frac3{2N+2}$. 
	
	Furthermore, the following estimate holds true:
	\begin{align}\label{eqAPostEst}
	\begin{aligned}
	\sum_{y\in Y^f}&\left[ 
	\frac{N+1}3 \left| c_y-  c_{n(y)} \right|^2 + \frac{2\pi^2(N+1)^3}{3^5}|y-n(y)|^2 \left| c_y +  c_{n(y)} \right|^2 \right] \\ 
	&\leq \left( \left|\|\bldf + \bldeta-\bldg \|_2^2 -2(2N+1)\sigma^2\right|^{1/2} + (2+\sqrt{2})\sigma (2N+1)^{(1+\delta)/4}\right)^2.
	\end{aligned}
	\end{align}
\end{corollary}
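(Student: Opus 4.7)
The plan is to derive Corollary \ref{corAPost} as an essentially one-line combination of Corollary \ref{corWellPosedness} with the preceding Gaussian tail lemma. The deterministic Corollary \ref{corWellPosedness} gives us what we want as soon as we can control $\|\bldf-\bldg\|_2^2$; the obstacle is that we do not observe $\bldf$ but only $\bldf+\bldeta$, and the role of the lemma is precisely to transfer the unobservable $\|\bldf-\bldg\|_2$ into the observable $\|\bldf+\bldeta-\bldg\|_2$ up to a Gaussian correction term.

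Concretely, I would apply the Gaussian lemma with $\bldv = \bldf - \bldg \in \C^{2N+1}$ and $K = 2N+1$, so that $\bldv + \bldeta = \bldf + \bldeta - \bldg$ matches the quantity available to the user. The lemma then states that with probability at least $1-e^{-(2N+1)^{(1+\delta)/2}}-2e^{-(2N+1)^\delta /8}$ we have
\begin{align*}
\|\bldf-\bldg\|_2 \leq \left|\|\bldf+\bldeta-\bldg\|_2^2 - 2(2N+1)\sigma^2\right|^{1/2} + (2+\sqrt{2})\sigma (2N+1)^{(1+\delta)/4}.
\end{align*}
On the good event, squaring and combining with the hypothesis of the corollary gives $\sum_{k=-N}^N|f(k)-g(k)|^2 \leq \frac{4N+4}{3}c_{\min}^2$, which, up to a negligible strict-vs-non-strict inequality that can be handled either by an $\varepsilon$-perturbation of $c_{\min}$ or by inspecting the proof of Corollary \ref{corWellPosedness}, places us in its setting. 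That corollary then provides both the matching statement (existence and uniqueness of $n(y)\in Y^g$ for each $y\in Y^f$) and the lower bound
\begin{align*}
\sum_{y\in Y^f}\left[\frac{N+1}{3}|c_y - c_{n(y)}|^2 + \frac{2\pi^2(N+1)^3}{3^5}|y-n(y)|^2|c_y+c_{n(y)}|^2\right] \leq \sum_{k=-N}^N|f(k)-g(k)|^2.
\end{align*}
Substituting the lemma's upper bound for the right-hand side produces \eqref{eqAPostEst} exactly.

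The main obstacle is essentially bookkeeping rather than any analytical difficulty: one has to check that the dimension $K=2N+1$ is used consistently throughout (in the probability, in the $2K\sigma^2$ correction, and in the power $K^{(1+\delta)/4}$), and that the model hypothesis $f,g\in\mcs(2q)$ with $q\geq 3/(2N+2)$ survives unchanged from Corollary \ref{corWellPosedness} so that a one-to-one matching $y\mapsto n(y)$ is in fact well-defined once the size condition has been verified. No new analytic ingredient is required — the separation and Paley--Wiener machinery have already done all the work inside Theorem \ref{mainThm1d} and its corollary, and the probabilistic part is entirely localized in the preceding lemma.
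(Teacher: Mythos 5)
Your proposal is correct and is exactly the argument the paper intends (the paper gives no explicit proof beyond the phrase ``Combining these results''): apply the Gaussian lemma with $\bldv=\bldf-\bldg$ and $K=2N+1$ to bound the unobservable $\|\bldf-\bldg\|_2$ by the observable quantity, then feed that bound into Corollary \ref{corWellPosedness}. Your remark about the strict versus non-strict inequality is a fair point of care that the paper itself glosses over, and your $\varepsilon$-perturbation fix handles it.
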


\begin{remark}
	The case of bounded instead of Gaussian noise is easier. We skip the details.
\end{remark}

We close this paper by giving a small example, how the bound can be applied. 

\begin{example}
	We consider the exponential sum $f$ with frequencies 
	$$Y^f = \{ 0.1, 0.3, 0.6, 0.9 \}, \quad c = [1.1, -1.1, 2, 2],$$
	where we enumerate the frequencies in order of their size. Now we sample $f$ at $-20, \dots, 20$ and add Gaussian noise as described in Corollary \ref{corAPost} of variance $\sigma^2$. Next, we apply ESPRIT to obtain an estimate $\tilde{f}$. We calculate the left-hand side of \eqref{eqAPostEst}, called error, and the right-hand side, called error estimator. Further, we calculate the sampling distance, which is $\|\bldf-\tilde{\bldf}\|_2^2$. The results for different choices of $\sigma$ are presented in Figure \ref{fErrEst01}. For each choice of $\sigma$, fifty instances were calculated and the largest error is shown. 
	
	We pick $\delta = 0.9$, resulting in a probability of a little bit more than $94\%$, that the estimate can be applied. For $\sigma=1$ the premise of Corollary \ref{corAPost} is actually not satisfied. We can observe in Figure \ref{fErrEst01} that the given bound seems to be reasonably sharp. 
	
	\begin{figure}
		\centering
		\includegraphics[scale=0.5]{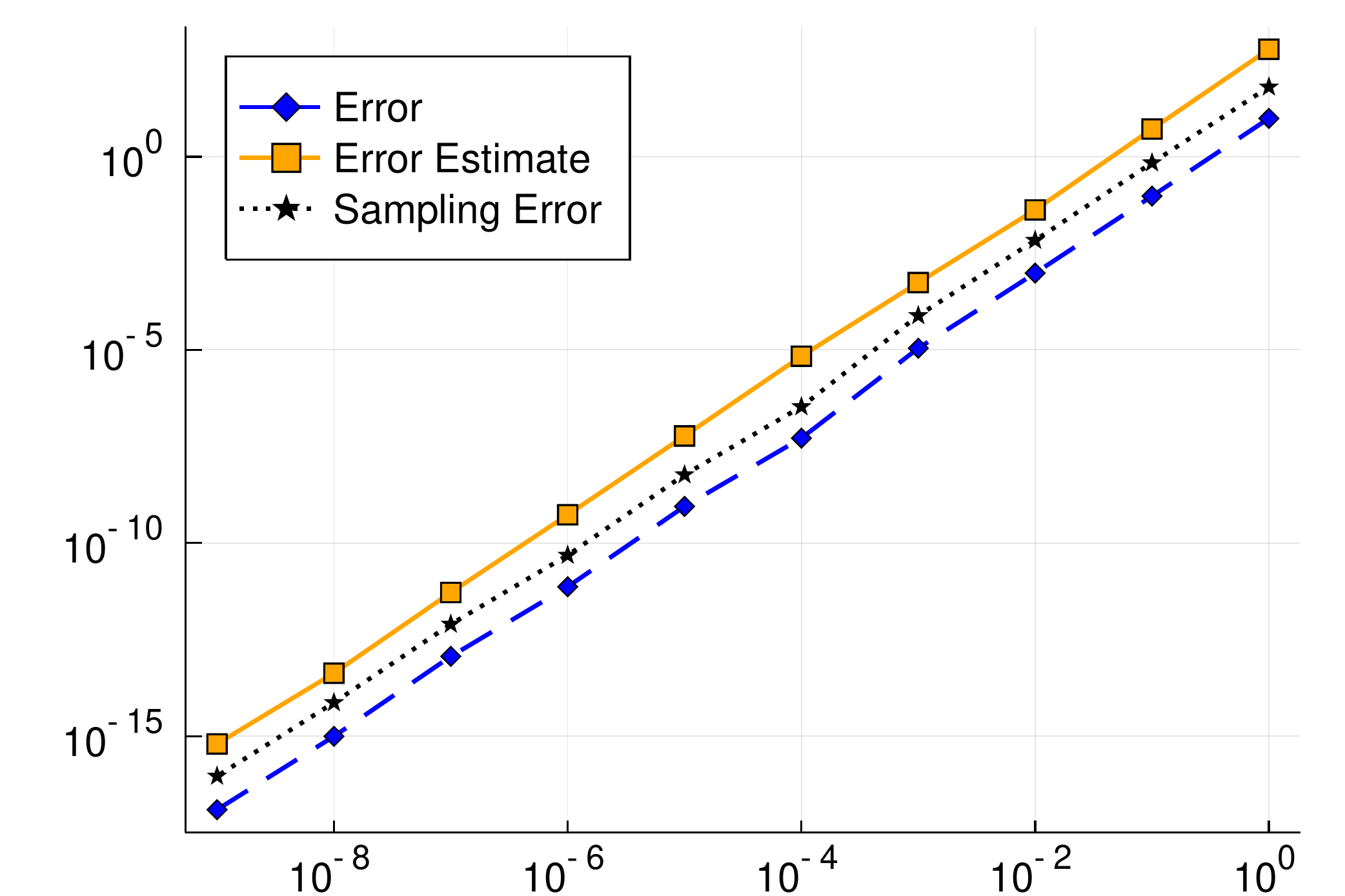}
		\caption{Comparison of the error and the error estimate at different noise levels. $x$-axis: Standard deviation of the noise. }
		\label{fErrEst01}

	\end{figure}
	
\end{example}

\subsection*{Acknowledgments}
The author was supported by the Bavarian Ministry
of Economy by means of the research project ``Big Picture''. 

The results of the paper are extensions of results obtained in the author's phd thesis \cite{diederichs2018sparse}, written under supervision of Prof. Armin Iske. The author would like to thank Prof. Iske for his constant support and encouragements.
Furthermore, the author is indebted to Prof. Tomas Sauer for helpful remarks on the manuscript.

\FloatBarrier
\bibliographystyle{plain}
\bibliography{bibliography}

%

\end{document}